\documentclass[pdflatex,sn-mathphys-num]{sn-jnl}% Math and Physical Sciences Numbered Reference Style
\usepackage{dirtytalk}
\usepackage{graphicx}%
\usepackage{multirow}%
\usepackage{xcolor}
\usepackage{amsmath,amssymb,amsfonts}%
\usepackage{amsthm}%
\usepackage{mathabx}
\usepackage{mathrsfs,hyperref}%
\usepackage[title]{appendix}%
\usepackage{xcolor}%
\usepackage{textcomp}%
\usepackage{manyfoot}%
\usepackage{booktabs}%
\usepackage{algorithm}%
\usepackage{algorithmicx}%
\usepackage{algpseudocode}%
\usepackage{listings}%
\usepackage{amsmath,amsthm,bbold}
\usepackage{amsfonts,adjustbox,amssymb,xcolor}
\usepackage{rotating,enumitem}
\usepackage{url}
\usepackage[utf8]{inputenc}
\usepackage[english]{babel}
\usepackage{tikz}
\usepackage{pgfplots}
\pgfplotsset{compat=1.18}
\usepackage{hyperref}
\usepackage{dirtytalk}

\newtheorem{definition}{Definition}

\newcommand{\mA}{\mathcal A}

\theoremstyle{thmstyleone}%
\newtheorem{theorem}{Theorem}%  meant for continuous numbers
\newtheorem{lemma}{Lemma}
\theoremstyle{thmstyletwo}%

\theoremstyle{thmstylethree}%

\newtheorem{corollary}{Corollary}
\begin{document}

\title[Article Title]{A note on the stability of Arens products under small perturbations of multiplication.}
\author{Deepika Rajoriya}
%%=============================================================%%
%% GivenName	-> \fnm{Joergen W.}
%% Particle	-> \spfx{van der} -> surname prefix
%% FamilyName	-> \sur{Ploeg}
%% Suffix	-> \sfx{IV}
%% \author*[1,2]{\fnm{Joergen W.} \spfx{van der} \sur{Ploeg} 
%%  \sfx{IV}}\email{iauthor@gmail.com}
%%=============================================================%%

%\author[]{\fnm{Lav} \sur{Kumar Singh}}\email{lav.singh@imfm.si}

%\author[]{\fnm{Aljoša} \sur{Peperko}}\email{aljosa.peperko@fs.uni-lj.si}
%\equalcont{These authors contributed equally to this work.}

%\affil[]{\begin{center}\orgdiv{Department of Mathematics}, \orgname{Institute of Mathematics, Physics and Mechanics}, \orgaddress{\street{Jadranska Ulica, 19}, \city{Ljubljana}, \postcode{1000}, \country{Slovenia}}\end{center}}
%%==================================%%
%% Sample for unstructured abstract %%
%%==================================%%
\abstract{ In this short note, we study the small perturbations of multiplication structure of a Banach algebra $\mathcal A$, and its impact on the two Arens products on the bidual $\mathcal A^{**}$. We show that the property of being \say{Arens irregular} is stable under sufficiently small perturbation in the multiplication structure of a Banach algebra. Further we show that for any Banach function algebra which is equivalent to a uniform algebra, the Arens regularity is stable under small perturbation on the multiplication structure of the original Banach function algebra.  }

\keywords{Banach algebras, Arens regularity, small perturbation, Banach function algebras, stability. }

%%\pacs[JEL Classification]{D8, H51}

\pacs[MSC Classification]{46J10, 43A30, 46E25, 20F24}

\maketitle
\section{Introduction}
Stability of a property under a small perturbation is studied in various disciplines like physics, numerical analysis, differential equations, functional analysis etc. Small perturbations of multiplication in a Banach algebra provide significant insight into the structure complexity of the Banach algebra. Impact of small perturbations of multiplication on various properties has been studied by sevaral authors in \cite{Dosi, Jarosz1985, Johnsons}. In this note we study the effect of small perturbations of multiplication structure of a Banach algebra on its second dual algebra (with respect to both the Arens products). Consequently, we establish the fact that being Arens irregular is stable under small perturbations [Cor. \ref{cor1}]. Further we show that for Banach function algebras which are equivalent to a uniform algebra, the Arens regularity is also stable [Th. \ref{BFAU}]. Such algebras are said to satify "Stable Arens regularity (SAR)" property.

\subsection{Arens products}
We quickly recall the definitions of the two products $\Box$ and
$\Diamond$.  Let $\mA$ be a Banach algebra. For $a \in \mA$,
$\omega\in \mA^*$, $f\in \mA^{**}$, consider the functionals
$\omega_a, {}_{a}\omega\in \mA^*$, $\omega_f,{}_f\omega\in \mA^{**}$
given by
\[
w_a=(L_a)^*(\omega),~~
{}_a\omega=(R_a)^*(\omega);~~ \omega_f(a)=f({}_a\omega)\text{ and }
{}_f\omega(a)=f(\omega_a).
\]
Then, for $f,g\in \mA^{**}$ the
operations $\square$ and $\diamond$ are given by
\[
(f\square g)
(\omega)= f({}_g\omega)\text{ and } (f\diamond
g)(\omega)=g(\omega_f)
\]
for all $\omega \in \mA^*$. And, $\mA$ is said to be {\em Arens regular} if the
products $\Box$ and $ \Diamond$ agree. Else the Banach algebra is said to be {\em Arens irregular}. Equivalently, Arens products can be characterized by iterated weak-* limits
\begin{align*}
	f\square g&=\overset{j}{\underset{w^*}{\lim }}\overset{i}{\underset{w^*}{\lim}}~ f_j g_i\\
	f\diamond g&=\overset{i}{\underset{w^*}{\lim }}\overset{j}{\underset{w^*}{\lim}}~ f_jg_i
\end{align*}
where $\{f_j\}$ and $\{g_i\}$ are bounded nets  in $\mathcal A$ converging to $F$ and $G$ respectively in the weak-* topology on $\mathcal A^{**}$. Arens regularity has been a central theme of study in the domain of Banach algebras for quite some time (see for instance \cite{Lav0} and \cite{Lav1} for recent developements). They also help us realize the second dual of a $C^*$-algebra as a von Neumann algebra (equipped with either products) \cite{Lav2}.  \\

 \section{Small perturbations of Banach algebras}
\begin{definition} \cite{Jarosz1985} An (algebraic) $\epsilon$-perturbation of a Banach algebra $(\mathcal A, \boldsymbol{\cdot},\|.\|)$ is  the algebra $(\mathcal A, \times)$ such that $$\|a\times b-a\cdot b\|\leq \epsilon\|a\|.\|b\|$$ for all $a,b\in \mathcal A$.\end{definition}  It is easy to see that the new multiplication $\times$ is continuous with respect to the original norm $\|.\|$, but this norm may not be sub-multiplicative with respect $\times$. Using the continuity of $\times$, one can always define an equivalent norm $\|.\|_1$ on $\mathcal A$ such that $(\mathcal A, \times, \|.\|_1)$ becomes a Banach algebra. Let the equivalence of two  norms be given by $$c_1\|a\|\leq \|a\|_1\leq c_2\|a\|$$  for some $c_1,c_2>0$. Due to this equivalence, the continuous dual of $\mathcal A$ is same vector space under both the multiplication.
 \begin{lemma}\label{perturb}
 	If $(\mathcal A, \cdot~, \|.\|)$ is a Banach algebra and $(\mathcal A, \times, \|.\|_1)$ is its $\epsilon$- perturbation then Banach algebras $(\mathcal A^{**},\square_1)$ and $(\mathcal A^{**}, \diamond_1)$ are also $\epsilon$-perturbations of $(\mathcal A^{**},\square )$ and $(\mathcal A^{**},\diamond)$ respectively. ($\square_1$ and $\diamond_1$ denote the Arens product on the bidual of $\epsilon$-perturbation.)
 \end{lemma}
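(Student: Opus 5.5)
The plan is to express both Arens products as the second adjoint extensions of the underlying bilinear maps. Then the difference $f\square_1 g - f\square g$ becomes the corresponding Arens extension of the bilinear map $D(a,b)=a\times b-a\cdot b$, and the whole statement reduces to the fact that the first and second Arens extensions of a bounded bilinear map preserve its norm.

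\textbf{Step 1: the Arens extensions are linear in the bilinear map.} Let $m:\mA\times\mA\to\mA$ be any bounded bilinear map. Define the first Arens extension in three stages, exactly as the paper does for $\square$:
\[
m^*(\omega,a)(b)=\omega(m(a,b)),\qquad
m^{**}(f,\omega)(a)=f(m^*(\omega,a)),\qquad
m^{***}(f,g)(\omega)=f(m^{**}(g,\omega)).
\]
Unwinding the definitions shows that $m=\cdot$ gives $m^{***}(f,g)=f\square g$ and $m=\times$ gives $f\square_1 g$. Each stage is linear in $m$. Hence with $D=\times-\cdot$ we get
\[
f\square_1 g-f\square g=D^{***}(f,g).
\]
The same holds for $\diamond$, using the reversed construction: $m^{r}(a,b)=m(b,a)$, then $(m^{r***})^{r}$. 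This is consistent with the iterated weak-$*$ limit description recorded earlier, since in those limits the product $f_jg_i$ can be replaced by $D(f_j,g_i)$.

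\textbf{Step 2: the norm estimate.} I show $\|m^{***}\|\le\|m\|$ stage by stage. First, $\|m^*(\omega,a)\|\le\|\omega\|\,\|m\|\,\|a\|$. Then $\|m^{**}(f,\omega)\|\le\|f\|\,\|\omega\|\,\|m\|$. Finally $\|m^{***}(f,g)\|\le\|m\|\,\|f\|\,\|g\|$. The reversed construction gives the same bound. Applying this to $D$, which satisfies $\|D\|\le\epsilon$ in the original norm $\|\cdot\|$, yields
\[
\|f\square_1 g-f\square g\|\le\epsilon\|f\|\,\|g\|,
\]
where the norm on $\mA^{**}$ is the bidual norm induced by $\|\cdot\|$. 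The analogous bound holds for $\diamond_1$ and $\diamond$.

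\textbf{Step 3: the choice of norm.} The main subtlety is which norm the estimate lives in. The definition of $\epsilon$-perturbation measures $\|a\times b-a\cdot b\|$ in the original norm. The equivalent norm $\|\cdot\|_1$ is used only to make $(\mA,\times)$ a Banach algebra, and it induces the same dual and bidual as vector spaces. Consequently $\square_1$ and $\diamond_1$, as bilinear maps on $\mA^{**}$, do not depend on the choice of norm. One also checks that the weak-$*$ topologies coincide, so the iterated-limit description is unaffected. The statement is therefore to be read with $\|\cdot\|^{**}$ on $\mA^{**}$, exactly mirroring the definition.

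I expect the only real obstacle to be bookkeeping in Step 1 for $\diamond$: the order of the adjoints must be matched to the paper's conventions for $\omega_a$ and ${}_a\omega$.
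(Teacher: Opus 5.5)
Your proposal is correct and is essentially the paper's argument: the paper bounds the differences $\xi_1-\xi_2$, $\nu_1-\nu_2$ and $f\square g-f\square_1 g$ in three successive stages, all in the original norm $\|\cdot\|$, which is exactly your stage-by-stage estimate for the Arens extension of $D=\times-\cdot$. Your reversal $m^{r}(a,b)=m(b,a)$ just makes explicit the ``similarly'' the paper invokes for $\diamond$.
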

 \begin{proof}
 	Let $\nu_1:\mathcal A^{**}\times \mathcal A^*\to \mathcal A^*$ denote the map $\nu_1(f,a^*)={}_f(a^*)\in (\mathcal A, \cdot,\|.\|)^*$ and $\nu_2:\mathcal A^{**}\times \mathcal A^*\to \mathcal A^*$ denote the map $\nu_2(f,a^*)={}_f(a^*)\in (\mathcal A, \times, \|.\|_1)^*$. 
 	Let $f,g\in \mathcal A^{**}$, then
 	\begin{align}\label{approx1}
 		\|f\square g-f\square_1g\|&=\sup_{\|a^*\|\leq 1}|f\square g(a^*)-f\square_1g(a^*)| \nonumber\\&=\sup_{\|a^*\|\leq 1}|f(\nu_1(g,a^*))-f(\nu_2(g,a^*))|\nonumber\\&\leq ||f||\sup_{\|a^*\|\leq 1}\|\nu_1(g,a^*)-\nu_2(g,a^*)\|
 	\end{align}
 	But 
 	\begin{align}\label{approx2}
 	\|\nu_1(g,a^*)-\nu_2(g,a^*)\|&=\sup_{\|a\|\leq 1}|\nu_1(g,a^*)(a)-\nu_2(g,a^*)(a)|\nonumber\\&=\sup_{\|a\|\leq 1}|g(\xi_1(a^*,a))-g(\xi_2(a^*,a))|\nonumber\\&\leq ||g||\sup_{\|a\|\leq 1}\|\xi_1(a^*,a)-\xi_2(a^*,a)\|
 	\end{align}
 	where $\xi_1:\mathcal A^*\times \mathcal A\to \mathcal A^*$ be the map $\xi_1(a^*,a)=(a^*)_a\in (\mathcal A,\cdot, \|.\|)^*$ and $\xi_2:\mathcal A^*\times \mathcal A\to \mathcal A^*$ denote the map $\xi_2(a^*,a)=(a^*)_a\in (\mathcal A, \times ,\|.\|_1)^*$. Then
 	 \begin{align}\label{approx3}
\|\xi_1(a^*,a)-\xi_2(a^*,a)\|&=\sup_{\|b\|\leq 1}|\xi_1(a^*,a)(b)-\xi_2(a^*,a)(b)|\nonumber\\&=\sup_{\|b\|\leq 1}|a^*(a\cdot b)-a^*(a\times b)|\nonumber\\&=\|a^*\|\sup_{\|b\|\leq 1}\|a\cdot b-a\times b\| \nonumber\\&\leq \|a^*\|\sup_{\|b\|\leq 1}\epsilon\|a\|.\|b\|\nonumber\\&=\epsilon\|a^*\|.\|a\|
 	\end{align}
 	substituting equation \ref{approx3} in equation \ref{approx2}, we get \begin{equation}\label{approx4}\|\nu_1(g,a^*)-\nu_2(g,a^*)\|\leq \epsilon\|a^*\|.\|g\|\end{equation}
 	substituting equation \ref{approx4} in equation \ref{approx1}, we get $$	\|f\square g-f\square_1g\|\leq \epsilon \|f\|.\|g\|~~~~\forall f,g\in \mathcal A^{**}.$$
 	Hence, the first Arens product on $(\mathcal A, \times ,\|.\|)^{**}$ is $\epsilon$-perturbation of the first Arens product on $(\mathcal A, \cdot, \|.\|)^{**}$. Similarly, it can be proved that the second Arens product on $(\mathcal A, \times ,\|.\|)^{**}$ is $\epsilon$-perturbation of the second Arens product on $(\mathcal A, \cdot, \|.\|)^{**}$, i.e
 	 $$	\|f\diamond g-f\diamond_1g\|\leq \epsilon \|f\|.\|g\| ~~~~\forall f,g\in \mathcal A^{**}$$
 	 \end{proof}
 \begin{corollary}\label{cor1}
 	Sufficiently small $\epsilon$-perturbations of a Arens irregular Banach algebra are Arens irregular.
 \end{corollary}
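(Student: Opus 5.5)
Since $(\mathcal A,\cdot,\|.\|)$ is Arens irregular, we can choose $f_0,g_0\in\mathcal A^{**}$ with $f_0\square g_0\neq f_0\diamond g_0$. After normalizing we may take $\|f_0\|=\|g_0\|=1$ (both Arens products are bilinear, so scaling does not change whether they differ). Then set
\[
\delta:=\|f_0\square g_0-f_0\diamond g_0\|>0 .
\]
We will show that every $\epsilon$-perturbation $(\mathcal A,\times,\|.\|_1)$ with $\epsilon<\delta/2$ is Arens irregular. The key point is that $\delta$ depends only on the original algebra, so the threshold $\delta/2$ is uniform over all such perturbations.

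The main tool is Lemma \ref{perturb}, which gives, for all $f,g\in\mathcal A^{**}$, the estimates $\|f\square g-f\square_1 g\|\leq\epsilon\|f\|\|g\|$ and $\|f\diamond g-f\diamond_1 g\|\leq\epsilon\|f\|\|g\|$. One point needs care here: these norms are computed on the common underlying space $\mathcal A^{**}$ using the original norm $\|.\|$. The lemma's proof takes suprema over the unit balls of $\|.\|$ and its dual norms, so this is consistent. We need no comparison with $\|.\|_1$, because Arens regularity is a purely algebraic statement about equality of the two products, and the bidual is the same vector space for both norms.

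Applying the lemma to $f_0,g_0$ and using the triangle inequality gives
\begin{align*}
\|f_0\square_1 g_0-f_0\diamond_1 g_0\| &\geq \|f_0\square g_0-f_0\diamond g_0\| -\|f_0\square g_0-f_0\square_1 g_0\|-\|f_0\diamond g_0-f_0\diamond_1 g_0\|\\
&\geq \delta-2\epsilon>0 .
\end{align*}
Therefore $\square_1\neq\diamond_1$ on $\mathcal A^{**}$, that is, the perturbed algebra is Arens irregular.

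The only step that requires attention is confirming that Lemma \ref{perturb} compares the products in one fixed norm, rather than mixing $\|.\|$ and $\|.\|_1$. If the norms were mixed, the equivalence constants $c_1,c_2$ would enter the bound. Even then the argument would still work, since $\epsilon$ only needs to be small relative to a constant determined by the original algebra.
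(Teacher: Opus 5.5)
Your argument is correct, and it takes a different route from the paper.

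The paper uses the topological characterization of Arens regularity. It picks $u$ for which $v\mapsto u\square v$ is not weak$^*$-continuous. This failure is witnessed by a bounded net $v_\alpha\to v$, a functional $a^*$, and a cofinal subnet with $|u\square v_\beta(a^*)-u\square v(a^*)|>2\delta$. It then uses only the $\square$-half of Lemma~\ref{perturb} to show that, for $\epsilon<\delta/2$, a gap of at least $\delta$ survives for $\square_1$. Hence $v\mapsto u\square_1 v$ is also discontinuous, and the perturbed algebra is irregular.

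You instead work directly from the definition. You use both halves of the lemma and the triangle inequality at a single pair $(f_0,g_0)$.

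What your route buys:
\begin{itemize}
\item It does not need the equivalence between regularity and weak$^*$-continuity of $v\mapsto u\square v$.
\item It does not need the reduction to bounded witnessing nets. That step is a Krein--\v{S}mulian-type argument which the paper passes over with ``without loss of generality''.
\item It gives an explicit threshold depending only on $\mathcal A$: any $\epsilon<\frac12\sup_{\|f\|=\|g\|=1}\|f\square g-f\diamond g\|$ works.
\end{itemize}
The paper's route has only the minor economy of needing the estimate for the first Arens product alone.

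Your closing hedge about mixed norms is unnecessary, and as phrased it is slightly off. The constants $c_1,c_2$ depend on the chosen renorming of the perturbed algebra, not on $\mathcal A$. This does no harm, since regularity does not depend on that choice and one can always take $\|.\|_1=(1+\epsilon)\|.\|$.
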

 \begin{proof}
 	Let $(\mathcal A, \cdot, \|.\|)$ be an Arens irregular Banach algebra. Then the mapping $v\mapsto u\square v$ is not weak*-weak* continuous in $\mathcal A^{**}$. Hence, there exist a net $\{v_\alpha\}_{\alpha\in \Gamma}$ converging to $v\in \mathcal A^{**}$ in weak-* topology, and an element $u\in \mathcal A^{**}$,  $a^*\in \mathcal A^*$, a $\delta>0$ and a cofinal subnet $\{v_{\beta}\}_{\beta\in \Gamma_1\subset \Gamma}$ such that \begin{equation}\label{yo1}
 		|u\square v_\beta(a^*) -u\square v(a^*)|>2\delta~~\forall \beta\in \Gamma_1
 	\end{equation}
 	without loss of generality we can assume that $\|u\|\leq 1, \|v\|\leq 1$, $\|v_\alpha\|\leq 1$ for all $\alpha$ and $\|a^*\|\leq 1$. \\
 	Now let $(\mathcal A, \times)$ be any $\epsilon$-perturbation of $(\mathcal A, \cdot, \|.\|)$ with an equivalent norm, where $\epsilon<\frac{\delta}{2}$. Then due to lemma \ref{perturb}, we have \begin{equation}\label{yo2}
 		|u\square_1v_\beta(a^*)-u\square v_\beta(a^*)|\leq \frac{\delta}{2},~~	|u\square_1v(a^*)-u\square v(a^*)|\leq \frac{\delta}{2}
 	\end{equation}
 	Using the equations \ref{yo1}, $\ref{yo2}$ and invoking triangle inequality, we deduce that 
$$|u\square_1 v_\beta(a^*)-u\square_1v(a^*)|\geq \delta~~~\forall \beta\in \Gamma_1.$$
Since, $\Gamma_1$  is co-final in $\Gamma$, we know that the map $v\mapsto u\square_1 v$ is not weak*-weak* continuous in $(\mathcal A, \times, \|.\|)^{**}$ and hence the algebra $(\mathcal A, \times)$ is not Arens regular.
 \end{proof}
 It is well known that $\epsilon$-perturbations do not preserve commutativity (or non-commutativity). For example consider a non-commutative Banach algebra $(\mathcal A,\cdot, \|.\|)$. For any $k\geq 2$, define two new products $a\times_1 b=\frac{1}{k}a\cdot b$ and $a\times_2 b=\frac{1}{k}(a\cdot b+b\cdot a)\leq $. Clearly $(\mathcal A, \times_1, \|.\|)$ is a Banach algebra and then one can easily notice that $\|a\times_1 b-a\times_2 b\|=\|\frac{1}{k}ba\|\leq \frac{1}{k}\|a\|.\|b\|$, i.e algebra $(\mathcal A, \times_2)$ is the $\frac{1}{k}$-perturbation of Banach algebra $(\mathcal A, \times_1,\|.\|)$. The former is non commutative while the latter is commutative.\\
 
It is known that Arens regularity is generally not preserved under $\epsilon$-perturbations of Banach algebra. But for certain class of Banach algebra, Arens regularity is actually preserved under $\epsilon$-perturbations. One such class of algebras are \emph{Banach function algebras equivalent to uniform algebras}, as we shall prove now.
\begin{definition} \cite[Def. 3.1.2]{Dales}
	A Banach function algebra $\mathcal B$ is a function algebra (point separating subalgebra of $C(S)$ for some locally compact space $S$ ) with a norm $\|.\|$ such that $\|f\|\geq \|f\|_\infty$  and $(\mathcal B, \|.\|)$ is a Banach algebra.  A Banach function algebra $(\mathcal B,\|.\|)$ is said to be equivalent to a uniform algebra if $\|.\|\sim \|.\|_\infty$.
\end{definition}
Banach function algebras are by default commutative and semi-simple (see \cite[Definition 3.1.2]{Dales}). Although the uniform algebra $C(S)$ (w.r.t uniform norm) is always Arens regular, Banach function algebras may sometimes fail to be Arens regular. For example the Fourier algebra of a compact infinite group is a Banach function algebra but is not Arens regular. But Banach function algebras which are equivalent to a uniform algebra are always Arens regular. Hence, we have the following.
\begin{theorem}\label{BFAU}
	$\epsilon$-perturbations of a Banach function algebra equivalent to a uniform algebra are always Arens regular.
\end{theorem}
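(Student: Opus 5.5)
The plan is to avoid comparing the two Arens products on $(\mathcal A,\times)^{**}$ directly with those on $(\mathcal A,\cdot)^{**}$. Lemma \ref{perturb} only gives closeness of the products, and Arens regularity is an exact equality, so it cannot simply be transported. Instead I would show that a sufficiently small perturbation $(\mathcal A,\times)$ is itself, up to an isomorphism of Banach algebras, a Banach function algebra equivalent to a uniform algebra. The fact recalled just before the theorem then gives Arens regularity. This reading requires $\epsilon$ below a constant depending on $\mathcal A$, in the same spirit as Corollary \ref{cor1}.

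\textbf{Step 1: a squaring inequality for the original product.} Let $(\mathcal A,\cdot,\|.\|)$ be a Banach function algebra with $\|f\|_\infty\leq\|f\|\leq C\|f\|_\infty$. Since $\|f\cdot f\|_\infty=\|f\|_\infty^2$, we get
\[
\|a\cdot a\|\geq \|a\cdot a\|_\infty=\|a\|_\infty^2\geq C^{-2}\|a\|^2 ,
\]
so $\|a\cdot a\|\geq c\|a\|^2$ with $c=C^{-2}$.

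\textbf{Step 2: transfer to the perturbation.} If $(\mathcal A,\times)$ is an $\epsilon$-perturbation, then $\|a\times a-a\cdot a\|\leq\epsilon\|a\|^2$. Hence
\[
\|a\times a\|\geq (c-\epsilon)\|a\|^2 \quad\text{for all } a\in\mathcal A ,
\]
which is nontrivial whenever $\epsilon<c$. Passing to the equivalent submultiplicative norm $\|.\|_1$ only changes the constant, giving $\|a\times a\|_1\geq c'\|a\|_1^2$ with $c'>0$. Iterating this along the powers $a^{\times 2^n}$ and using Gelfand's formula yields $r_\times(a)\geq c''\|a\|_1$ for all $a$. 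So the spectral radius of $(\mathcal A,\times)$ is a norm equivalent to $\|.\|_1$.

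\textbf{Step 3: commutativity and semisimplicity (the main obstacle).} A priori the product $\times$ need not be commutative; the example preceding the theorem shows perturbations can change commutativity. Here I would invoke the classical theorem of Le Page and Hirschfeld--\.Zelazko: a Banach algebra whose spectral radius is equivalent to its norm, i.e.\ one satisfying $\|a^2\|\geq k\|a\|^2$, is commutative. Its radical is then trivial, since $r_\times$ vanishes exactly on the radical and $r_\times$ is a norm. So $(\mathcal A,\times)$ is commutative and semisimple. The delicate part of this step is verifying that the constants survive the iteration and the change of norm; the commutativity itself I would cite rather than reprove.

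\textbf{Step 4: conclusion.} By Step 3 the Gelfand transform $a\mapsto\hat a$ embeds $(\mathcal A,\times)$ injectively into $C_0(\Phi_{\times})$, where $\Phi_\times$ is its character space. It satisfies $\|\hat a\|_\infty=r_\times(a)\sim\|a\|_1$, so its image is a point-separating, uniformly closed subalgebra. Thus $(\mathcal A,\times)$ is isomorphic to a Banach function algebra equivalent to a uniform algebra. Such algebras are Arens regular by the fact stated just before the theorem, and Arens regularity is preserved by Banach algebra isomorphisms, since these induce weak$^*$-homeomorphic algebra isomorphisms of the biduals for both Arens products. Hence $(\mathcal A,\times)$ is Arens regular for every $\epsilon<C^{-2}$.
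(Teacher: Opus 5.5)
Your proposal is correct and follows essentially the same route as the paper: a lower squaring bound $\|f\times f\|_1\ge k_1\|f\|_1^2$ for the perturbed product, iterated to make the spectral radius equivalent to the norm, then Le Page/Hirschfeld--Zelazko for commutativity and a uniform-algebra structure, and finally Arens regularity. Your explicit condition $\epsilon<C^{-2}$ is one the paper's proof tacitly needs, since its bound $\|f\cdot f\|-\epsilon\|f\|^2$ only yields $k_1>0$ for $\epsilon$ below such a constant; so your small-$\epsilon$ version is the reading the argument actually proves, and it matches the later SAR definition.
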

\begin{proof}
	Notice that Banach function algebras which are equivalent to a uniform algebras are always Arens regular (because uniform algebras are subalgebras (with supremum norm) of the $C^*$-algebra $C(S)$ for some compact space $S$). We shall prove that $\epsilon$-perturbation of Banach function algebra which are equivalent to a uniform algebra, are again equivalent to a uniform algebra.
	Let $(\mathcal B, \|.\|)$ be a Banach function algebra equivalent to a uniform algebra, i.e $\|.\|\sim \|.\|_\infty$. Hence, there exists $c_1,c_2>0$ such that \begin{equation}
		c_1\|f\|_\infty\leq \|f\|\leq c_2\|f\|_\infty~~\forall f\in \mathcal B
	\end{equation}
	Let $(\mathcal B, \times)$ be a $\epsilon$-perturbation of Banach function algebra $\mathcal B$ and $\|.\|_1$ be an equivalent norm such that $(\mathcal B, \times, \|.\|_1)$ becomes a Banach algebra.. Then $\|f.g-f\times g\|\leq \epsilon\|f\|.\|g\|$. Hence,
	\begin{align*}
		\|f.f\|-\epsilon\|f\|^2\leq\|f\times f\|\leq (1+\epsilon)\|f\|^2
	\end{align*}
	Using the equivalence of three norms, we can deduce that 
	\begin{equation}
		k_1\|f\|_1^2\leq \|f\times f\|_1\leq k_2\|f\|_1^2
	\end{equation}
for some $k_1,k_2>0$.	Through induction one can see that the spectral radius $r_1(f)$ of $f$ in the Banach algebra $(\mathcal B, \times ,\|.\|_1)$ satisfies $$k_1\|f\|_1\leq r_1(f)\leq k_2\|f\|_1~~~\forall f\in \mathcal B.$$
Hence, by \cite[Corollary 5 ]{Gerd}, the Banach algebra $(\mathcal B, \times, \|.\|_1)$ is commutative and further by a theorem of Hirschfeld and Zelazko \cite[Corollary 8]{Gerd} the spectral radius $r_1(f)$ is a norm and $(\mathcal B, \times, r_1)$ forms a uniform Banach algebra (because $r_1(f\times f)=(r_1(f))^2$ holds true). Thus, the algebra $(\mathcal B, \times , \|.\|_1)$ is Arens regular (because it is equivalent to a uniform algebra.)
\end{proof}
The above discussion prompt us to define the following.
\begin{definition}
	An Arens regular Banach algebra $\mathcal A$ is said to have stable Arens regularity (SAR) if there exists a $K>0$ such that for every $0<\epsilon\leq K$, the $\epsilon$-perturbation of $\mathcal A$ is Arens regular.
\end{definition}
Clearly, by theorem \ref{BFAU}, Banach function algebras which are equivalent to a uniform algebra have stable Arens regularity (SAR) property. It is evident from the \cite[Th. 1]{Palacios} that every $C^*$-algebra has stable Arens regularity (SAR) property. We end this section with the following natural questions.\\

\noindent Question: Does the Banach algebra $\ell_1(\mathbb Z)$ (with respect to point-wise multiplication) has stable Arens regularity property?\\

\noindent Question: Find Banach algebras with SAR property which are not a Banach function algebra or a $C^*$-algebra?
\section*{Funding} 

This research received institutional support from Amrita Vishwavidyapeetham (Deemed to be University), Haridwar Campus, India, where the author is affiliated to. No specific external grant was received for this work.

\section*{Data Availability Statement}

Data sharing is not applicable to this article as no datasets were generated or analyzed during the current study.
\section*{Conflict of Interest}

The authors declare that they have no known competing financial interests or personal relationships that could have appeared to influence the work reported in this paper.

~\\~\\~\\
 Deepika Rajoriya, \textit{Department of Mathematics, Amrita Vishwavidyapeetham Deemed University, Haridwar Campus, Haridwar, Uttrakhand, India.}\\
Email: \url{deepikarajoriya@outlook.com}
\end{document}